\documentclass[11pt]{article}
\usepackage[utf8]{inputenc}
\usepackage{amsbsy,amsfonts,amsmath,amssymb,amsthm
,bbm,enumerate,euscript,graphicx,color}

\usepackage{hyperref} 
\hypersetup{
    colorlinks=true,
    linkcolor=blue,
    citecolor=blue,
    urlcolor = magenta,
}

\def\R{\mathbb{R}}
\def\N{\mathbb{N}}

\def\Z{\mathbb{Z}}

\def\P{\mathbb{P}}

\def\p{\mathbf{p}}

\def\D{\mathrm{d}\,}

\newtheorem{theo}{Theorem}
\newtheorem{coro}{Corollary}

\newtheorem{prop}{Proposition}

\theoremstyle{definition}
\newtheorem{defn}{Definition}
\newtheorem*{proo}{Proof}

\numberwithin{equation}{section}
\newtheorem*{rema}{Remark}

\title{\bf {Void probabilities for perturbed lattices under mixing}}
\author{Daniela Flimmel
\thanks{e-mail: \texttt{daniela.flimmel@matfyz.cuni.cz}, address: Sokolovská 83, 186 75 Prague, Czech Republic}
\and
\footnotesize{Department of Probability and Mathematical Statistics, Charles University}}

\date{\today}

\begin{document}
\maketitle
\begin{abstract}
    We study void probabilities for stationary perturbed lattices with possibly dependent perturbations. Our main result gives a general upper bound in terms of two ingredients: the number of lattice sites having a uniformly positive probability of hitting the observation set, and the $\alpha$-mixing rate of the perturbation field. For unions of lattice cubes, this yields explicit bounds without moment or regularity assumptions on the marginal distribution of the perturbations. As a consequence, we obtain quantitative estimates for growing bounded Borel sets with non-empty interior, including stretched-exponential decay under exponential mixing and polynomial decay under polynomial mixing.
\end{abstract}

\section{Introduction}
Perturbed lattices form a natural class of point processes lying between perfectly ordered lattices and more disordered spatial models. Starting from a deterministic lattice, each site is displaced by a random vector, possibly with dependence between the displacements. Despite the simplicity of the construction, perturbed lattices exhibit a rich range of spatial behaviour and have appeared in several contexts in probability, statistical physics and spatial statistics. For instance, they have been used as tractable models for particle systems and cosmological initial configurations (\cite{GJL02,GBJMSL06}). More recently, they have been considered as flexible models for regular spatial point patterns. In particular, Gaussian perturbation fields lead to computationally tractable models whose second-order characteristics can be evaluated explicitly and which can be fitted to spatial data \cite{F26}. Related extensions incorporating Gibbsian interactions have also recently been proposed in \cite{CR26}.

A particularly active motivation for studying perturbed lattices comes from hyperuniformity. A stationary point process is hyperuniform if its number variance grows at a slower rate than the volume of a large observation window. See \cite{Tor18, LR25} for broad overviews. Perturbed lattices constitute one of the basic classes of hyperuniform point processes and have been studied extensively from this perspective, see for instance \cite{G04, KT20, M26}. Most of the literature on perturbed lattices concerns independent
perturbations, while stationary dependent perturbation fields have received
considerably less attention. Recent results show that dependence may
substantially affect even basic large-scale properties such as
hyperuniformity \cite{DFHL24,F26}. In the present paper we investigate
another fundamental characteristic, namely void probabilities, under
general mixing assumptions on the perturbation field.

A further motivation comes from stabilization methods in stochastic geometry. Stabilization is a tool for proving laws of large numbers and central limit theorems for geometric statistics such as nearest-neighbour graphs and Voronoi-type functionals (see e.g. \cite{PY01,LRSY19}). Such arguments depend on controlling the tail of a random radius beyond which distant points no longer influence a local geometric score. Frequently, a large stabilization radius is connected with the presence of a large empty region. Quantitative void-probability estimates may therefore provide a useful ingredient for developing stabilization theory for perturbed lattices. We do not pursue such applications here, but this perspective provides an additional reason for seeking estimates that remain valid under dependence.

In the present paper we focus on \emph{void probabilities},
\[
\mathcal{P}(B):=\P(\Xi(B)=0),
\]
that is, the probability that an observation set $B$ contains no point of the process. Void probabilities are among the basic distributional characteristics of a point process. For balls, void probabilities are directly related to the spherical contact
distribution function (see \cite{CSKM13})
\[
H(r):=\P\bigl(d(x,\Xi)\leq r\bigr)
=1-\mathcal{P}(B(x,r)).
\]

Our main aim is to obtain quantitative void-probability estimates for
perturbed lattices whose displacement field is stationary but not
necessarily independent. In contrast with the independent setting, our
bounds allow general dependence controlled only through mixing
coefficients and, for suitable observation sets, require no moment
or regularity assumptions on the marginal perturbation distribution.

For unions of lattice cubes, the required good sites can be found without any moment, density or support assumption on the marginal perturbation distribution. This gives explicit void-probability bounds depending only on the mixing rate. By approximating growing observation sets from inside by lattice cubes, we obtain corresponding estimates for arbitrary bounded Borel sets with non-empty interior. In particular, exponential mixing leads to stretched-exponential decay of the void probability, while polynomial mixing yields polynomial decay up to a logarithmic factor. Finite-range dependence gives an exponential-in-volume bound.

We conclude by returning to independent perturbations. In this case the hitting events factorize conditionally on the stationarizing shift, and the void probability admits an exact product representation. This allows one to replace the separated-sites argument by the total hitting mass and yields bounds for arbitrary bounded Borel sets. Our estimates are less precise than the distribution-sensitive asymptotics obtained in \cite{ESY26}, but they require no regularity assumptions on the perturbation law and serve to highlight the qualitative difference between the independent and dependent settings. The novelty of this paper lies in the treatment of general class of perturbed lattices allowing also heavy tailed distribution of the perturbations and
dependencies within the perturbation field.

The paper is organized as follows. Section~2 introduces stationary perturbed lattices, void probabilities and the mixing coefficients used throughout. Section~3 contains the main estimates and their consequences under different mixing assumptions, together with the discussion of independent perturbations. The proofs are collected in the final section.

\section{Definitions}
We assume that all random variables are defined on a common probability space $(\Omega,\mathcal A,\mathbb P)$, which remains fixed throughout the paper. 
Moreover, for a Borel set $B$, we denote by $|B|$ its Lebesgue measure. In particular, by $B(x,r)$, we denote the closed Euclidean ball centered at $x \in \R^d$ with radius $r>0$. Finally, $\#(\cdot)$ denotes the cardinality and $\|\cdot\|$ is the Euclidean norm in $\R^d$.

\begin{defn}[Perturbed lattice]\label{D_PerturbedLattice}
    Let $\p=\{p_i, i \in \Z^d\}$ be a strictly stationary random field in $\R^d$, i.e.
    $$(p_i)_{i \in \Z^d} \stackrel{D}{=} (p_{i+j})_{i \in \Z^d}$$
    for all $j \in \Z^d$.
    Let $U$ be a uniform random vector on $[0,1)^d$ independent of $\mathbf{p}$.
    By a \textit{perturbed lattice}, we understand a stationary random counting measure $\Xi$ on $\R^d$ defined by
    \textcolor{black}{$$\Xi : =  \sum_{i \in \Z^d}\delta_{i+p_i+U}.$$}
Specifically, if $\p$ consists of independent random variables, we speak about \textit{independently perturbed lattice}. If for all $i\in \Z^d$, $p_i=0$ almost surely, then $\Xi$ is called \textit{stationarized lattice}. For an introduction to point processes and random measures, we refer the reader to \cite{DVJ08}.
\end{defn}

\begin{defn}
    By \emph{void probabilities}, we understand a system $\{\mathcal{P}(B), B \in \mathcal{B}^d\}$ of probabilities that there is no point of $\Xi$ lying in the set $B$, i.e.
    \[\mathcal{P}(B):= \P(\Xi (B) = 0).\]
\end{defn}

Multiplicities do not affect void probabilities. Moreover, no moment assumption on the perturbations is needed for local finiteness (see Lemma 2.5 of \cite{DFHL24}).

We estimate the void probabilities of $\Xi$  in terms of the $\alpha-$mixing coefficient of the perturbation field $\mathbf{p}$. Recall that for two sub-$\sigma$-fields $\mathcal{A}_1, \mathcal{A}_2 \subset \mathcal{A}$, we denote
\begin{equation}\label{Eq_mixingOfAlg}
\alpha(\mathcal{A}_1, \mathcal{A}_2) = \textrm{sup}\{|\P(A \cap B)-\P(A)\P(B)|;\, A \in \mathcal{A}_1, B \in \mathcal{A}_2\}.
\end{equation} Specifically, for $C \subset \Z^d$, we denote by $\sigma(p_C)$ the $\sigma-$field generated by $p_C:= \{p_i, i \in C\}$. 
\begin{defn}[$\alpha$-mixing coefficients of a random field]
    For $k,\ell\in \mathbb N\cup\{\infty\}$ and $r>0$, define the $\alpha$-mixing coefficient $\alpha_k,l(r)$ by
\begin{equation}\label{Eq_mixingOfField}
\alpha_{k,\ell}(r)
=
\sup\Bigl\{
\alpha\bigl(\sigma( p_A),\sigma( p_B)\bigr);\,
A,B\subset\Z^d,\,
\#(A)\le k,\,
\#(B)\le \ell,\,
d_{\Z^d}(A,B)\ge r
\Bigr\}, 
\end{equation}
where
$$d_{\Z^d}(A,B)=\inf\{\|i-j\|;\, i \in A, j \in B\}, \qquad  A, B \subset \Z^d.$$
\end{defn}
Although the coefficients $\alpha_{k,\ell}$ are defined in full
generality, the results below require only $\alpha_{1,\infty}$. This
reflects the structure of the argument: after selecting spatially
separated lattice sites, we successively compare the perturbation at one
site with the $\sigma$-field generated by the sites considered previously.

\section{Void probabilities of perturbed lattices}

For a bounded Borel set $B\subset\R^d$, put
\begin{equation}\label{Eq_probabilities}
q_i(u,B)
:=
\P(i+p_i+u\in B),
\qquad
i\in\Z^d,\quad u\in[0,1)^d.
\end{equation}
Thus $q_i(u,B)$ is the probability that the point originating from the
lattice site $i$ falls into $B$ when the stationarizing shift is fixed at
$u$.

Our first result gives the main probabilistic mechanism used throughout
the paper. It separates the problem into two ingredients: the existence
of sufficiently many lattice sites having a non-negligible probability
of hitting the observation set, and the decay of dependence between
spatially separated perturbations.

\begin{theo}[A general void-probability bound]\label{T_general}
Let $B\subset\R^d$ be a bounded Borel set. Suppose that there exist
$\varepsilon>0$ and $M\in\N$ such that
\begin{equation}\label{EQ_good_sites}
\inf_{u\in[0,1)^d}
\#\left(\{i\in\Z^d:q_i(u,B)\geq\varepsilon\}
\right)
\geq M.
\end{equation}
Then there exists a constant $c_d>0$, depending only on the dimension,
such that, for every $r\geq1$,
\[
\mathcal P(B)
\leq
\exp\left\{
-c_d\varepsilon\frac{M}{r^d}
\right\}
+
\frac{M}{r^d}\alpha_{1,\infty}(r).
\]
\end{theo}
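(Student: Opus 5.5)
We condition on the stationarizing shift and argue for each fixed $u\in[0,1)^d$. Since $U$ is independent of $\mathbf p$,
\[
\mathcal P(B)=\int_{[0,1)^d}\P\bigl(i+p_i+u\notin B\ \text{for all } i\in\Z^d\bigr)\,\D u .
\]
It therefore suffices to bound the integrand by the right-hand side of the theorem, uniformly in $u$. Fix $u$ and let $G_u=\{i:q_i(u,B)\ge\varepsilon\}$; by \eqref{EQ_good_sites}, $\#(G_u)\ge M$. The first step is a greedy selection of separated sites. We pick $i_1,\dots,i_N\in G_u$ with pairwise distances at least $r$. Each chosen site excludes at most $K_d r^d$ sites of $\Z^d$ within distance $<r$ (using $r\ge1$, so $K_d$ can be taken to be the volume of a ball of radius $2$ times $r^d$, or similar). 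Hence the greedy procedure gives $N\ge M/(K_d r^d)$, and we may take exactly $N:=\lceil M/(K_dr^d)\rceil\le M/r^d$ sites, adjusting $K_d\ge1$ so that this last inequality holds. If $M<K_d r^d$ we still get $N\ge1$, which is fine.

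Next, the void event is contained in $\bigcap_{k=1}^N E_k$, where $E_k=\{i_k+p_{i_k}+u\notin B\}\in\sigma(p_{i_k})$ and $\P(E_k)=1-q_{i_k}(u,B)\le 1-\varepsilon$. Put $F_k=E_1\cap\dots\cap E_k\in\sigma(p_{\{i_1,\dots,i_k\}})$. The singleton $\{i_{k+1}\}$ has distance at least $r$ from $\{i_1,\dots,i_k\}$, so the definition \eqref{Eq_mixingOfField} with $\ell=\infty$ gives
\[
\P(F_{k+1})\le \P(F_k)\P(E_{k+1})+\alpha_{1,\infty}(r)\le(1-\varepsilon)\P(F_k)+\alpha_{1,\infty}(r).
\]
Iterating this inequality yields
\[
\P(F_N)\le(1-\varepsilon)^N+\sum_{j=0}^{N-1}(1-\varepsilon)^j\alpha_{1,\infty}(r)\le e^{-\varepsilon N}+N\alpha_{1,\infty}(r).
\]
Inserting $M/(K_dr^d)\le N\le M/r^d$ gives $\exp\{-c_d\varepsilon M/r^d\}+ (M/r^d)\alpha_{1,\infty}(r)$ with $c_d=1/K_d$. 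This bound is uniform in $u$, and integrating over $u$ finishes the proof.

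The main obstacle is the bookkeeping in the site selection. We need the lower bound $N\gtrsim M/r^d$ for the exponential term and the upper bound $N\le M/r^d$ for the mixing term. Both can be arranged at the same time by truncating the greedy selection at the right count. The truncation is legitimate because using fewer sites only enlarges the intersection, so the bound still holds. We also need measurability in $u$ for the conditioning step, which follows from Fubini applied to the jointly measurable map $(u,\omega)\mapsto\Xi(B)$.
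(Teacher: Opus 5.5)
Your overall route is the paper's: condition on $U=u$, extract $r$-separated good sites, decouple them one at a time with $\alpha_{1,\infty}(r)$, use $1-x\le e^{-x}$, and integrate in $u$. The only structural difference is the selection step. You use a greedy packing argument, whereas the paper takes $L=\lceil r\rceil$ and applies the pigeonhole principle to the $L^d$ residue classes of $\Z^d$ modulo $L$, which gives the explicit constant $c_d=2^{-(d+1)}$. Either selection works.

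There is, however, a concrete error in your bookkeeping. The claim $N=\lceil M/(K_dr^d)\rceil\le M/r^d$ cannot be arranged by enlarging $K_d$. Whenever $M<r^d$ (e.g.\ $M=1$ and $r$ large), the left-hand side equals $1$ while the right-hand side is $<1$. Your iteration also charges $N$ mixing errors (the sum $\sum_{j=0}^{N-1}$). So in that regime you only obtain $e^{-\varepsilon}+\alpha_{1,\infty}(r)$. This does not imply the stated bound $e^{-c_d\varepsilon M/r^d}+(M/r^d)\alpha_{1,\infty}(r)$ when $\varepsilon$ is small compared with $\alpha_{1,\infty}(r)$.

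The repair is the one the paper uses. The first event needs no decoupling, so start the recursion at $\P(F_1)=\P(E_1)\le 1-\varepsilon$. This gives
\[
\P(F_N)\le (1-\varepsilon)^N+(N-1)\,\alpha_{1,\infty}(r),
\]
and $N-1<M/(K_dr^d)\le M/r^d$ holds for every $M\ge 1$ and $r\ge 1$. The paper likewise only proves $N-1\le M/L^d\le M/r^d$, never $N\le M/r^d$. With this change your argument is complete.
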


Here the parameter $M$ measures how many lattice sites have, uniformly with
respect to the random shift $U$, a non-negligible chance of producing a
point in $B$, while $\varepsilon$ quantifies the size of this probability.
The parameter $r$ plays a different role: it determines the spatial
separation imposed on the selected sites. Increasing $r$ improves the
mixing between the corresponding hitting events, but reduces the number
of sites that can be retained by a factor of order $r^d$. The bound in
Theorem~\ref{T_general} expresses precisely this trade-off.

In applications, the main geometric task is therefore to find
$\varepsilon>0$ for which the number $M$ in
\eqref{EQ_good_sites} admits a useful lower bound. For observation sets
adapted to the underlying lattice, such as unions of lattice cubes, this
can be done without imposing any regularity assumptions on the marginal
distribution of the perturbations.

\begin{prop}[Unions of lattice cubes]\label{C_cubes}
Let
\[
B=
\bigcup_{k\in I}\bigl(k+[0,1)^d\bigr),
\]
where $I\subset\Z^d$ is finite and non-empty.
Then there exists a constant $c>0$, depending only on the dimension and
the marginal law of $p_0$, such that, for every $r\geq1$,
\[
\mathcal P(B)
\leq
\exp\left\{
-c\frac{|B|}{r^d}
\right\}
+
\frac{|B|}{r^d}\alpha_{1,\infty}(r).
\]
\end{prop}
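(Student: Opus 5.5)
The plan is to apply Theorem~\ref{T_general} with $M$ of order $|B|=\#(I)$ and an $\varepsilon>0$ depending only on the law of $p_0$. The key observation is that the cubes $k+[0,1)^d$, $k\in\Z^d$, tile $\R^d$. For each $u\in[0,1)^d$ the point $p_0+u$ therefore lies in exactly one cube $m+[0,1)^d$ with $m\in\Z^d$, and
\[
\sum_{m\in\Z^d}\P\bigl(p_0+u\in m+[0,1)^d\bigr)=1 .
\]

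To get uniformity in $u$, I would pick a finite set $F\subset\Z^d$ with $\P(p_0\in F+[-1,0]^d)\ge 1/2$. Such a set exists by tightness of a single random vector. For $p_0\in m'+[-1,0]^d$ and $u\in[0,1)^d$ we have $p_0+u\in m'+(-1,1)^d$, which meets only $2^d$ unit lattice cubes. Hence, for every $u$, the event $\{p_0\in F+[-1,0]^d\}$ forces $p_0+u$ into one of at most $N:=2^d\#(F)$ cubes $m+[0,1)^d$, where $m$ ranges over a fixed finite set $G$ independent of $u$. By pigeonhole, for each $u$ there is some $m(u)\in G$ with
\[
\P\bigl(p_0+u\in m(u)+[0,1)^d\bigr)\ge \varepsilon:=\frac1{2N}.
\]
This $\varepsilon$ depends only on $d$ and the law of $p_0$.

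Next, by stationarity $p_i\stackrel{D}{=}p_0$. For $k\in I$, put $i=k-m(u)$. Then
\[
q_i(u,B)\ge \P\bigl(i+p_i+u\in k+[0,1)^d\bigr)=\P\bigl(p_0+u\in m(u)+[0,1)^d\bigr)\ge\varepsilon .
\]
The map $k\mapsto k-m(u)$ is injective, so for every $u$ at least $\#(I)=|B|$ sites satisfy $q_i(u,B)\ge\varepsilon$. Thus \eqref{EQ_good_sites} holds with $M=|B|$.

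Theorem~\ref{T_general} then gives
\[
\mathcal P(B)\le \exp\{-c_d\varepsilon |B|/r^d\}+\frac{|B|}{r^d}\alpha_{1,\infty}(r),
\]
which is the claim with $c=c_d\varepsilon$.

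The main obstacle is the uniformity in $u$: the distinguished cube index $m(u)$ may jump as $u$ varies. The argument above handles this, because only a single good target cube per $u$ is needed, and the count of good sites uses only injectivity of the shift $k\mapsto k-m(u)$. No moment or density assumption on $p_0$ enters.
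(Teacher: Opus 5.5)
Your proposal is correct and follows essentially the same route as the paper. Tightness gives a bounded region of mass at least $1/2$ that, uniformly in $u$, meets a bounded number of unit lattice cubes. Pigeonhole then yields a cube $m(u)$ (the paper's $-j(u)$) hit with probability at least $\varepsilon$, and stationarity together with injectivity of $k\mapsto k-m(u)$ gives $M=\#(I)=|B|$ good sites for Theorem~\ref{T_general}.

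One harmless slip: $p_0\in m'+[-1,0]^d$ only gives $p_0+u\in m'+[-1,1)^d$, not the open cube. This set is still a union of exactly $2^d$ lattice cubes, so your count $N=2^d\#(F)$ is unaffected.
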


It is worth noting that Proposition~\ref{C_cubes} requires neither moment
conditions nor a density, bounded support, or any other regularity of the
marginal law of $p_0$. The constant $c$ depends on this law, but the form
of the estimate is universal.

The lattice-cube estimate can in turn be used to treat general growing
observation sets. The assumption of non-empty interior below is purely
geometric: it ensures that a sufficiently large dilation contains order
$t^d$ complete unit lattice cubes. 

\begin{theo}[Growing observation sets with non-empty interior]\label{C_convex}
Let $K\subset\R^d$ be a bounded Borel set with non-empty interior.
Then there exist constants $t_0<\infty$ and $c_K>0$, where $t_0$
depends only on $K$ and $c_K$ depends moreover on $d$ and the marginal
law of $p_0$, such that, for every $t\geq t_0$ and $r\geq1$,
\[
\mathcal P(tK)
\leq
\exp\left\{
-c_K\frac{t^d}{r^d}
\right\}
+
|K|\frac{t^d}{r^d}\alpha_{1,\infty}(r).
\]
\end{theo}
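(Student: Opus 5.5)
The strategy is to reduce Theorem~\ref{C_convex} to Proposition~\ref{C_cubes} by inner approximation with lattice cubes, using the monotonicity of void probabilities: if $B\subset tK$, then $\{\Xi(tK)=0\}\subset\{\Xi(B)=0\}$, so $\mathcal P(tK)\le\mathcal P(B)$. It therefore suffices to construct, for every large $t$, a finite non-empty union $B_t$ of unit lattice cubes $k+[0,1)^d$ with $B_t\subset tK$ and
\[
c'\,t^d\le |B_t|\le |K|\,t^d,
\]
where $c'>0$ depends only on $K$ and $d$.

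For the construction, use the non-empty interior of $K$ to pick $x_0\in\R^d$ and $\rho>0$ with the open cube $x_0+(-\rho,\rho)^d\subset K$. Then $tK$ contains $tx_0+(-t\rho,t\rho)^d$. Let $I_t$ be the set of $k\in\Z^d$ with $k+[0,1)^d\subset tx_0+(-t\rho,t\rho)^d$, and put $B_t=\bigcup_{k\in I_t}(k+[0,1)^d)$. In each coordinate the admissible integers form an interval of length at least $2t\rho-2$, so
\[
\#I_t\ge(2t\rho-2)^d\ge(t\rho)^d \quad\text{whenever } t\ge t_0:=2/\rho .
\]
Thus $I_t$ is non-empty and $|B_t|=\#I_t\ge \rho^d t^d$. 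The upper bound follows from $B_t\subset tK$, which gives $|B_t|\le|tK|=t^d|K|$. Note that $t_0$ depends only on $K$, as the statement requires.

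Now apply Proposition~\ref{C_cubes} to $B_t$, with $c$ depending on $d$ and the law of $p_0$:
\[
\mathcal P(tK)\le\mathcal P(B_t)\le \exp\Bigl\{-c\frac{|B_t|}{r^d}\Bigr\}+\frac{|B_t|}{r^d}\alpha_{1,\infty}(r).
\]
In the exponential term use the lower bound $|B_t|\ge\rho^d t^d$; in the mixing term use the upper bound $|B_t|\le|K|t^d$. This gives the claim with $c_K=c\rho^d$.

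There is no serious obstacle; the only care needed is in the lattice count. The cubes are half-open and the interior cube need not be lattice-aligned, so one loses up to one unit per side, and this is exactly what forces the threshold $t_0$. One should also check that the bound of Proposition~\ref{C_cubes} is used in the correct direction in each term: it decreases in $|B|$ through the exponential but increases in $|B|$ through the mixing term, which is why two different bounds on $|B_t|$ are needed.
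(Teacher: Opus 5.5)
Your proof is correct and follows essentially the same route as the paper: approximate $tK$ from inside by a union $B_t$ of unit lattice cubes, use monotonicity of void probabilities, and apply Proposition~\ref{C_cubes}, with the lower bound on $|B_t|$ in the exponential term and $|B_t|\le|K|t^d$ in the mixing term. The only difference is cosmetic: the paper first uses stationarity of $\Xi$ to translate $K$ so that $[-\rho_K,\rho_K]^d\subset K$ (giving $t_0=\rho_K^{-1}$ and $c_K=c(\rho_K/2)^d$), whereas you count the cubes inside an off-centre inscribed cube directly, losing one unit per side, which gives $t_0=2/\rho$ and $c_K=c\rho^d$.
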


Theorem~\ref{C_convex} reduces the asymptotic behaviour of the void
probability to the decay of the mixing coefficient. Optimizing the
separation parameter $r$ gives the following consequences.

\begin{coro}[Mixing rates for growing observation sets]\label{C_growing_rates}
Let $K\subset\R^d$ be a bounded Borel set with non-empty interior.
The constants below may depend on $d$, $K$, the marginal law of $p_0$,
and the constants appearing in the corresponding mixing assumption.

\begin{enumerate}
\item
Assume that there exist constants $C_\alpha,a>0$ such that
\[
\alpha_{1,\infty}(r)\leq C_\alpha e^{-ar},
\qquad r\geq1.
\]
Then there exist constants $c_1,c_2>0$ and $t_0<\infty$ such that
\[
\mathcal P(tK)
\leq
c_1\exp\left\{
-c_2t^{d/(d+1)}
\right\},
\qquad t\geq t_0.
\]

\item
Assume that there exist constants $C_\alpha>0$ and $p>0$ such that
\[
\alpha_{1,\infty}(r)\leq C_\alpha r^{-p},
\qquad r\geq1.
\]
Then
\[
\mathcal P(tK)
=
O\left(
t^{-p}(\log t)^{1+p/d}
\right),
\qquad t\to\infty.
\]
In particular, for every $\eta<p$,
\[
\mathcal P(tK)=O(t^{-\eta}).
\]
\end{enumerate}
\end{coro}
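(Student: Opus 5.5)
The plan is to apply Theorem~\ref{C_convex} directly and choose the separation parameter $r=r(t)$ to balance its two terms. Theorem~\ref{C_convex} gives, for $t\ge t_0$ and $r\ge1$,
\[
\mathcal P(tK)\le \exp\left\{-c_K\frac{t^d}{r^d}\right\}+|K|\frac{t^d}{r^d}\alpha_{1,\infty}(r).
\]
Only the mixing term needs the assumption on $\alpha_{1,\infty}$; the first term is handled by the choice of $r$.

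\emph{Exponential mixing.} The second term is at most $C_\alpha|K|\,(t/r)^d e^{-ar}$. Balancing exponents $t^d/r^d\asymp r$ suggests
\[
r=t^{d/(d+1)},
\]
which satisfies $r\ge1$ once $t\ge1$. Then $t^d/r^d=t^{d/(d+1)}$, so the first term equals $\exp\{-c_Kt^{d/(d+1)}\}$. The second term equals $C_\alpha|K|\,t^{d/(d+1)}e^{-at^{d/(d+1)}}$. The polynomial prefactor is absorbed by lowering the rate: for instance $s e^{-as}\le (2/a)e^{-as/2}$ for all $s\ge0$. Taking $c_2=\min(c_K,a/2)$ and $c_1=1+2C_\alpha|K|/a$ gives the claim for $t\ge\max(t_0,1)$.

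\emph{Polynomial mixing.} The second term is at most $C_\alpha|K|\,t^d r^{-d-p}$. We want the first term to be $O(t^{-p})$, which requires $c_K t^d/r^d\ge p\log t$. So choose
\[
r=\left(\frac{c_K}{p}\right)^{1/d}t\,(\log t)^{-1/d},
\]
which satisfies $r\ge1$ for large $t$. With this choice the first term equals exactly $t^{-p}$. For the second term, note $t^d/r^d=(p/c_K)\log t$ and
\[
r^{-p}=(p/c_K)^{p/d}\,t^{-p}(\log t)^{p/d}.
\]
Hence the second term is bounded by a constant times $t^{-p}(\log t)^{1+p/d}$. Adding the two terms gives $\mathcal P(tK)=O(t^{-p}(\log t)^{1+p/d})$. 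The statement for $\eta<p$ follows because $(\log t)^{1+p/d}=o(t^{p-\eta})$.

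There is no real obstacle here. The only points needing care are that the chosen $r$ satisfies $r\ge1$, which requires $t$ large and so enlarges $t_0$, and absorbing the polynomial prefactor in the exponential case into the constants $c_1,c_2$.
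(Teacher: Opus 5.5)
Your proposal is correct and follows the paper's proof essentially step for step. You use the same choice $r=t^{d/(d+1)}$ in the exponential case and $r=t/(A\log t)^{1/d}$ in the polynomial case, both fed into Theorem~\ref{C_convex}; you take the boundary value $A=p/c_K$ where the paper takes $A>p/c_K$, which is harmless. Your explicit absorption of the prefactor ($se^{-as}\le (2/a)e^{-as/2}$) and your check that $r\ge 1$ (by restricting to $t\ge\max(t_0,1)$) are slightly more careful than the paper's write-up.
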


The proofs of Theorem \ref{T_general}, Proposition \ref{C_cubes}, Theorem \ref{C_convex} and Corollary \ref{C_growing_rates} are postponed to Section \ref{S_Proofs}.

\begin{rema}
    Finite-range dependence provides the simplest limiting case of the
preceding bounds. If the perturbation field is $m$-dependent, then
$\alpha_{1,\infty}(r)=0$ for every $r>m$. Consequently, for every bounded
Borel set $K$ with non-empty interior there exists $c_K>0$ such that
\[
\mathcal P(tK)\leq \exp\{-c_Kt^d\}
\]
for all sufficiently large $t$. Similarly, Proposition~\ref{C_cubes}
gives
\[
\mathcal P(B)\leq \exp\{-c|B|\}
\]
for finite unions $B$ of lattice cubes.
\end{rema}

\subsection{Independent perturbations}
The preceding results are designed for dependent perturbation fields.
Their basic mechanism is to select many spatially separated sites with a
non-negligible hitting probability and to control the residual dependence
between them by mixing. Under independence, substantially more information
is available: conditionally on the stationarizing shift, the hitting
events factorize, so that the void probability can be controlled directly
through the total hitting mass
\[
\sum_{i\in\Z^d}q_i(u,B).
\]

This also clarifies the relation with the recent work \cite{ESY26}, where the authors study an unstationarized independently perturbed lattice, denoted by $\Pi$. Under a regularity assumption on
the perturbation tail
\(
p(r):=\P(\|p_0\|\geq r),
\)
they obtain, for large balls, the sharper estimates for $h(r):= \P(\Pi(B(0,r))=0)$:
\[
p(r)^{Cr^d}
\leq
h(r)
\leq
p(r)^{cr^d}.
\]
For Gaussian perturbations this yields
\[
 h(r)
=
\exp\{-\Theta(r^{d+2})\}
\]
at the logarithmic scale. Our Proposition~\ref{P_independent}, in
contrast, holds for an arbitrary i.i.d.\ perturbation distribution and
an arbitrary bounded Borel observation set, but gives its bound in the
implicit form of the total hitting mass. Thus the two results emphasize
different aspects: generality of the perturbation law and observation
set on the one hand, and sharp distribution-dependent asymptotics on the
other.

\begin{prop}\label{P_independent}
Let $\Xi$ be an independently perturbed lattice. 
Then for every bounded Borel set $B\subset \R^d$
\[
\mathcal P(B)
=
\int_{[0,1)^d}
\prod_{i\in\Z^d}
\bigl(1-q_i(u,B)\bigr)\,du,
\]
where the infinite product is understood as the decreasing limit over
finite subsets of $\Z^d$ and $q_i(u,B)$ is defined in \eqref{Eq_probabilities}. In particular,
\[
\mathcal P(B)
\leq
\int_{[0,1)^d}
\exp\left\{
-\sum_{i\in\Z^d}q_i(u,B)
\right\}\,du.
\]
\end{prop}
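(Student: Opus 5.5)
We condition on the stationarizing shift and use independence of $U$ and $\p$. By Fubini and the independence of $U$ from $\p$,
\[
\mathcal P(B)=\int_{[0,1)^d}\P\bigl(i+p_i+u\notin B\ \text{for all } i\in\Z^d\bigr)\,du .
\]
This requires the event $\{\Xi(B)=0\}$ to be jointly measurable in $(U,\p)$. That holds because it is a countable intersection of the measurable events $\{i+p_i+U\notin B\}$. So it suffices to evaluate, for fixed $u$, the probability $\P(E_u)$ of the event $E_u:=\bigcap_{i\in\Z^d}\{i+p_i+u\notin B\}$.

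Fix $u$ and let $F_n\uparrow\Z^d$ be an increasing sequence of finite sets, for instance the boxes $\{-n,\dots,n\}^d$. Put $E_u^{(n)}:=\bigcap_{i\in F_n}\{i+p_i+u\notin B\}$. These events decrease to $E_u$, so continuity from above gives $\P(E_u)=\lim_n\P(E_u^{(n)})$. Since the $p_i$ are independent, each $\P(E_u^{(n)})$ factorizes as $\prod_{i\in F_n}(1-q_i(u,B))$.

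The finite products are nonincreasing in the index set, as each factor lies in $[0,1]$. Hence the limit is the same along any exhausting sequence, and it equals the infimum over all finite subsets. This is exactly the decreasing-limit interpretation of $\prod_{i\in\Z^d}(1-q_i(u,B))$ stated in Proposition~\ref{P_independent}, which justifies that definition. Integrating over $u$ then gives the identity. The integrand is measurable in $u$ as a pointwise limit of finite products of the functions $u\mapsto q_i(u,B)$, which are measurable by Fubini.

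For the inequality, apply $1-x\le e^{-x}$ on $[0,1]$ to each factor of the finite products:
\[
\prod_{i\in F_n}(1-q_i(u,B))\le \exp\Bigl\{-\sum_{i\in F_n}q_i(u,B)\Bigr\}.
\]
Letting $n\to\infty$, the left side tends to the infinite product. By monotone convergence of the nonnegative series, the right side tends to $\exp\{-\sum_{i\in\Z^d}q_i(u,B)\}$, with the convention $e^{-\infty}=0$. Integrating over $u$ completes the argument.

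The only delicate point is the measure-theoretic bookkeeping: the joint measurability needed for Fubini and the passage from finite to infinite index sets. Both are handled by continuity of measure, since the events are monotone.
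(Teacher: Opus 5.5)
Your proof is correct and follows the same route as the paper. You condition on the shift $U=u$ (made rigorous via Fubini and the independence of $U$ and $\mathbf{p}$), factorize over finite exhausting sets by independence, pass to the limit by continuity from above, and apply $1-x\le e^{-x}$ before integrating over $u$; your remarks on joint measurability, independence of the limit from the exhausting sequence, and monotone convergence of the series fill in details the paper leaves implicit.
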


\begin{proof}
Conditionally on $U=u$, the events
\[
\{i+p_i+u\in B\},
\qquad i\in\Z^d,
\]
are independent. Let $I_n\uparrow\Z^d$ be an increasing sequence of finite
sets. Then
\[
\P(\Xi(B)=0\mid U=u)
=
\lim_{n\to\infty}
\prod_{i\in I_n}
(1-q_i(u,B)).
\]
Using $1-x\leq e^{-x}$ gives
\[
\P(\Xi(B)=0\mid U=u)
\leq
\exp\left\{
-\sum_{i\in\Z^d}q_i(u,B)
\right\}.
\]
Integration over $u\in[0,1)^d$ proves the claim.
\end{proof}

\begin{coro}[Perturbations with a density]\label{C_ind_density}
Assume that $p_0$ has density $f$ and define its periodization by
\[
g(x):=
\sum_{i\in\Z^d}f(x-i),
\qquad x\in\R^d.
\]
Suppose that
\[
c:=
\operatorname*{ess\,inf}_{x\in[0,1)^d}g(x)>0.
\]
Then, for every bounded Borel set $B\subset\R^d$,
\[
\mathcal P(B)\leq e^{-c|B|}.
\]
\end{coro}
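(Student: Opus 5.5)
The plan is to apply Proposition~\ref{P_independent} and show that the total hitting mass is at least $c|B|$ for almost every shift $u$. For fixed $u\in[0,1)^d$, the density assumption and Tonelli give
\[
\sum_{i\in\Z^d}q_i(u,B)
=\sum_{i\in\Z^d}\int_{\R^d}\mathbbm 1_B(i+x+u)f(x)\,dx
=\sum_{i\in\Z^d}\int_{\R^d}\mathbbm 1_B(y)f(y-i-u)\,dy .
\]
Interchanging sum and integral once more (all terms are nonnegative), this equals
\[
\int_B\sum_{i\in\Z^d}f(y-u-i)\,dy=\int_B g(y-u)\,dy .
\]

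Next I will use that $g$ is $\Z^d$-periodic. Its essential infimum over $[0,1)^d$ is then also its essential infimum over all of $\R^d$: the set $\{g<c\}$ is null in $[0,1)^d$, and hence null in every translate $k+[0,1)^d$, $k\in\Z^d$. So $g\ge c$ almost everywhere on $\R^d$. Since $y\mapsto y-u$ preserves Lebesgue measure, $g(y-u)\ge c$ for a.e.\ $y$, for every fixed $u$. Therefore
\[
\sum_{i}q_i(u,B)\ge c|B|\qquad\text{for every }u\in[0,1)^d.
\]

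Inserting this into the bound of Proposition~\ref{P_independent} gives
\[
\mathcal P(B)\le\int_{[0,1)^d}e^{-c|B|}\,du=e^{-c|B|}.
\]
There is no real obstacle here. The only points that need care are the measure-theoretic justifications: the Tonelli interchanges, which hold because all terms are nonnegative, and the fact that the essential infimum is taken only over the fundamental domain, which periodicity resolves. Note also that the independently perturbed lattice is assumed stationary, so all $p_i$ share the density $f$.
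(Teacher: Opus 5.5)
Your proposal is correct and follows the paper's proof: Tonelli to write the total hitting mass as $\int_B g(y-u)\,dy$, periodicity of $g$ to get the lower bound $c|B|$, then Proposition~\ref{P_independent}. Your remarks that the essential infimum over $[0,1)^d$ extends to all of $\R^d$ by periodicity, and that all $p_i$ share the density $f$ by stationarity, only make explicit what the paper leaves implicit.
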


\begin{proof}
By Tonelli's theorem,
\[
\begin{aligned}
\sum_{i\in\Z^d}q_i(u,B)
&=
\sum_{i\in\Z^d}
\int_B f(y-u-i)\,\D y=
\int_B
\sum_{i\in\Z^d}f(y-u-i)\,\D y=
\int_B g(y-u)\,\D y.
\end{aligned}
\]
Since $g$ is $\Z^d$-periodic and $g\geq c$ almost everywhere,
\[
\sum_{i\in\Z^d}q_i(u,B)\geq c|B|.
\]
The claim then follows directly from Proposition~\ref{P_independent}.
\end{proof}

\begin{rema}
A sufficient condition for Corollary~\ref{C_ind_density} is the existence
of $c>0$ such that
\[
f(x)\geq c,
\qquad x\in[0,1)^d.
\]
In particular, the condition holds for any continuous density that is
strictly positive on $[0,1]^d$, including non-degenerate Gaussian
perturbations.
\end{rema}

Corollary~\ref{C_ind_density} illustrates the additional flexibility provided by independence. Once the periodized density is bounded away from zero, an exponential-in-volume bound holds for arbitrary bounded Borel observation sets, without any geometric regularity assumption on \(B\). For dependent perturbations, the product structure is no longer available, and the role of geometry is instead to provide sufficiently many separated sites to which Theorem~\ref{T_general} can be applied.

\section{Proofs}\label{S_Proofs}
We first prove the general bound of Theorem~\ref{T_general}. We then
verify its good-sites condition for unions of lattice cubes, which yields
Proposition~\ref{C_cubes}. Finally, we approximate a growing observation
set from inside by such cubes to obtain Theorem~\ref{C_convex}.

\begin{proo}[of Theorem \ref{T_general}]
Fix $u\in[0,1)^d$ and choose $M$ indices $i$ for which
$q_i(u,B)\geq\varepsilon$. Put $L:=\lceil r\rceil$ and partition $\Z^d$ into the $L^d$ residue classes modulo $L$. By the pigeonhole principle, one of these classes contains at least
$\frac{M}{L^d}
$ of the selected indices. Since any two indices in the same residue class are separated by distance at least $L\geq r$, we can choose an $r$-separated subset of these indices of cardinality

$$
N:=
\max\left\{
1,
\left\lfloor\frac{M}{L^d}\right\rfloor
\right\}.
$$
Set $c_d:= 2^{-(d+1)}$. Since $r\leq L\leq2r$, 
$$
N\geq \frac{M}{2L^d} \geq c_d\frac{M}{r^d},
$$
while
$$
N-1
\leq
\frac{M}{L^d}
\leq
\frac{M}{r^d}.
$$
Denote the selected indices by $i_1,\ldots,i_N$ and put
$$
E_k:=\{i_k+p_{i_k}+u\notin B\}, \quad k=1,\ldots, N.
$$
By this construction, 
$$\{\Xi(B)=0\} \subseteq \bigcap_{k=1}^N E_k.$$
Since the indices $i_1,\ldots,i_{N}$ are mutually separated by distance at least $r$, the $\sigma$-fields
$$
\sigma(p_{i_k})
\qquad\text{and}\qquad
\sigma(p_{i_1},\ldots,p_{i_{k-1}})
$$
are separated by distance at least $r$ for every $k\geq2$. Therefore,
$$
\P\left(
E_k\cap\bigcap_{\ell=1}^{k-1}E_\ell
\right)
\leq
\P(E_k)
\P\left(
\bigcap_{\ell=1}^{k-1}E_\ell
\right)
+
\alpha_{1,k-1}(r).
$$
Iterating this estimate gives
$$
\P\left(
\bigcap_{\ell=1}^{N}E_\ell
\right)
\leq
\prod_{\ell=1}^{N}\P(E_\ell)
+
\sum_{k=2}^{N}\alpha_{1,k-1}(r) \leq
\prod_{\ell=1}^{N}\P(E_\ell)
+
(N-1)\alpha_{1,\infty}(r).
$$

Since $q_{i_k}(u,B)\geq\varepsilon$,
$$
\P(E_k)=1-q_{i_k}(u,B)\leq e^{-\varepsilon},
$$
and therefore
$$
\P(\Xi(B)=0\mid U=u)
\leq
e^{-\varepsilon N}
+
(N-1)\alpha_{1,\infty}(r).
$$
Using the bounds on $N$ gives
$$
\P(\Xi(B)=0\mid U=u)
\leq
\exp\left\{
-c_d\varepsilon\frac{M}{r^d}
\right\}
+ \frac{M}{r^d}\alpha_{1,\infty}(r).
$$

The right-hand side is uniform in $u$, so integration over
$u\in[0,1)^d$ proves the claim.

\end{proo}

\begin{proo}[of Proposition \ref{C_cubes}]
We show that there exists $\varepsilon_0>0$,
depending only on the marginal law of $p_0$, such that condition
\eqref{EQ_good_sites} holds with $M= |I|=|B|.$

Since the law of $p_0$ is tight, there exists $R<\infty$ such that
$$
\P(p_0\in[-R,R]^d)\geq\frac12.
$$
Fix $u\in[0,1)^d$. The shifted cubes
$$
[0,1)^d-u-j,
\qquad j\in\Z^d,
$$
form a partition of $\R^d$, and at most $(2R+2)^d$ of them intersect
$[-R,R]^d$. Hence there exists $j(u)\in\Z^d$ such that
$$
\P\bigl(p_0\in[0,1)^d-u-j(u)\bigr)
\geq
\varepsilon_0,
$$
with
$$
\varepsilon_0
:=
\frac{1}{2(2R+2)^d}.
$$

By stationarity of $\mathbf{p}$, 
$p_{k+j(u)}$ has the same distribution as $p_0$ for every $k\in I$. Therefore
$$
\begin{aligned}
q_{k+j(u)}(u,k+[0,1)^d)
&=
\P(k+j(u)+p_{k+j(u)}+u\in k+[0,1)^d)
\\
&=
\P(j(u)+p_0+u\in[0,1)^d)
\\
&\geq
\varepsilon_0.
\end{aligned}
$$
Since $k+[0,1)^d\subset B$, also $q_{k+j(u)}(u,B)\geq\varepsilon_0.$ Moreover, the indices $k+j(u)$ are distinct as $k$ ranges over $I$.
Thus, for every $u\in[0,1)^d$,
$$
\#\big(
\{i\in\Z^d:q_i(u,B)\geq\varepsilon_0\}
\big)
\geq |I|.$$

Applying Theorem~\ref{T_general} yields
$$
\mathcal P(B)
\leq
\exp\left\{
-c\frac{|B|}{r^d}
\right\}
+
\frac{|B|}{r^d}\alpha_{1,\infty}(r),
\qquad r\geq1,
$$
for a suitable constant $c >0$.
\end{proo}

\begin{proo}[of Theorem \ref{C_convex}]
By stationarity of $\Xi$, we may translate $K$ and assume without loss of
generality that $0\in\operatorname{int}(K)$. Hence there exists
$\rho_K>0$ such that
\[
[-\rho_K,\rho_K]^d\subset K,
\]
and consequently 
$[-\rho_Kt,\rho_Kt]^d\subset tK.$
Put $t_0:=\rho_K^{-1}.$ For $t\geq t_0$, let
\[
I_t
:=
\left\{
k\in\Z^d:
k+[0,1)^d\subset tK
\right\},
\]
and define
\[
B_t
:=
\bigcup_{k\in I_t}
\bigl(k+[0,1)^d\bigr).
\]
Since $\rho_Kt\geq1$, the set $B_t$ is non-empty and contains all unit
cubes
\[
Q_k:=k+[0,1)^d,
\qquad
k\in
\{0,\ldots,\lfloor\rho_Kt\rfloor-1\}^d.
\]
Therefore
\begin{equation}\label{eq_lowerB}
    |B_t|
\geq
\lfloor\rho_Kt\rfloor^d
\geq
\left(\frac{\rho_K}{2}\right)^dt^d.
\end{equation}
On the other hand, since $B_t\subset tK$,
\begin{equation}\label{eq_upperB} 
|B_t|
\leq
|tK|
=
|K|t^d.
\end{equation}

Applying Proposition~\ref{C_cubes} to $B_t$ gives
\[
\mathcal P(B_t)
\leq
\exp\left\{
-c\frac{|B_t|}{r^d}
\right\}
+
\frac{|B_t|}{r^d}\alpha_{1,\infty}(r),
\]
where $c>0$ depends only on $d$ and the marginal law of $p_0$. Consequently, by monotonicity of void probabilities and the bounds \eqref{eq_lowerB} and \eqref{eq_upperB}
\[
\mathcal P(tK)\leq \mathcal{P} (B_t)
\leq
\exp\left\{
-c_K\frac{t^d}{r^d}
\right\}
+
|K|\frac{t^d}{r^d}\alpha_{1,\infty}(r),
\]
with
\[
c_K
:=
c\left(\frac{\rho_K}{2}\right)^d.
\]
\end{proo}

\begin{proo}[of Corollary \ref{C_growing_rates}]
Assume first that
\(\alpha_{1,\infty}(r)\leq C_\alpha e^{-ar}\) and choose
\(
r=t^{d/(d+1)}.
\)
Then by Theorem \ref{C_convex}
\[
\mathcal P(tK)
\leq
\exp\left\{
-c_Kt^{d/(d+1)}
\right\}
+
C_\alpha|K|t^{d/(d+1)}
\exp\left\{
-at^{d/(d+1)}
\right\}
\]
for all $t\geq t_0$. Absorbing the polynomial factor into the exponential proves the claim.

Suppose now that
\(
\alpha_{1,\infty}(r)\leq C_\alpha r^{-p}.
\)
Theorem~\ref{C_convex} gives
\begin{equation}\label{EQ_poly_1}
\mathcal P(tK)
\leq
\exp\left\{
-c_K\frac{t^d}{r^d}
\right\}
+
C_\alpha|K|\frac{t^d}{r^{p+d}}.
\end{equation}
Set
\(
x:=\frac{t^d}{r^d}.
\)
Since
\(
r^{-p}=t^{-p}x^{p/d},
\)
inequality~\eqref{EQ_poly_1} becomes
\[
\mathcal P(tK)
\leq
e^{-c_Kx}
+
C_\alpha|K|t^{-p}x^{1+p/d}.
\]

Now choose
\[
r=
\frac{t}{(A\log t)^{1/d}},
\]
where $A>p/c_K$. For all sufficiently large $t$, this choice satisfies
$r\geq1$, and
\(
x=A\log t.
\)
Consequently,
\[
e^{-c_Kx}
=
t^{-c_KA}
=
O(t^{-p}),
\]
whereas
\[
t^{-p}x^{1+p/d}
=
A^{1+p/d}
t^{-p}(\log t)^{1+p/d}.
\]
It follows that
\[
\mathcal P(tK)
=
O\left(
t^{-p}(\log t)^{1+p/d}
\right).
\]

Finally, for every $\eta<p$,
\(
(\log t)^{1+p/d}
=
O(t^{p-\eta}),
\)
and hence
\[
\mathcal P(tK)=O(t^{-\eta}).
\]

\end{proo}

\section*{Acknowledgement}
This work was supported by the Czech Science Foundation, project no. 26-22373N.

\end{document}